\documentclass[12pt,a4paper]{article}

\usepackage[latin2]{inputenc}
\usepackage{amssymb}
\usepackage{paralist}
\usepackage{amsmath, calc}
\usepackage{amsfonts}
\usepackage{amsthm}
\usepackage{fullpage}
\usepackage{enumerate}
\usepackage{paralist}

\newtheorem{theorem}{Theorem}[section] %
\newtheorem{lemma}[theorem]{Lemma} %
\newtheorem{problem}[theorem]{Problem} %
\newtheorem{remark}[theorem]{Remark} %

\newtheorem*{theoremA}{Theorem A}
\newtheorem*{theoremB}{Theorem B}
\newtheorem*{theoremC}{Theorem C}
\newtheorem*{theoremD}{Theorem D}
\newtheorem*{theoremE}{Theorem E}
\usepackage{cite}

\begin{document}

\title{On a problem of Nathanson related to minimal asymptotic bases of order $h$}

\author{Shi-Qiang Chen\thanks{School of Mathematics and Statistics,
Anhui Normal University,  Wuhu  241003,  P. R. China;
csq20180327@163.com},
 Csaba
S\'andor \thanks{Department of Stochastics, Institute of Mathematics and Department of Computer Science and Information Theory, Budapest University of
Technology and Economics, M\H{u}egyetem rkp. 3., H-1111 Budapest, Hungary; MTA-BME Lend\"ulet Arithmetic Combinatorics Research Group, ELKH, M\H{u}egyetem rkp. 3., H-1111 Budapest, Hungary. Email: csandor@math.bme.hu. This author was supported by the NKFIH Grants No. K129335.},
Quan-Hui Yang \thanks{School of Mathematics and Statistics, Nanjing University of Information Science and Technology, Nanjing 210044, China; yangquanhui01@163.com.}
}
\date{}
\maketitle

\begin{abstract}
\noindent For integer $h\geq2$ and $A\subseteq\mathbb{N}$, we define
$hA$ to be all integers which can be written as a sum of $h$ elements of $A$.
The set $A$ is called an asymptotic basis of order $h$ if $n\in hA$ for all sufficiently large
integers $n$. An asymptotic basis $A$ of order $h$ is minimal if no proper subset of $A$ is an
asymptotic basis of order $h$. For $W\subseteq\mathbb{N}$, denote by $\mathcal{F}^*(W)$ the set of all finite, nonempty subsets of $W$. Let $A(W)$ be the set of all numbers of the form $\sum_{f \in F} 2^f$, where $F \in \mathcal{F}^*(W)$. In this paper, we give some characterizations of the partitions $\mathbb{N}=W_1\cup\cdots \cup W_h$ with the property that
$A=A(W_1)\cup\cdots \cup A(W_{h})$ is a minimal asymptotic basis of order $h$. This generalizes
a result of Chen and Chen, recent result of Ling and Tang, and also recent result of Sun.

 {\it
2010 Mathematics Subject Classification:} 11B13

{\it Keywords and phrases:}  Asymptotic bases; minimal asymptotic bases; binary representation
\end{abstract}


\section{Introduction}

Let $\mathbb{N}$ be the set of all nonnegative integers. For integer $h\geq2$ and $A\subseteq\mathbb{N}$, we define
$$hA = \{n: n= a_1+\cdots+a_h, a_i\in A, i = 1, 2,\ldots,h\}.$$
The set $A$ is called an asymptotic basis of order $h$ if $n\in hA$ for all sufficiently large
integers $n$. An asymptotic basis $A$ of order $h$ is minimal if no proper subset of $A$ is an
asymptotic basis of order $h$. This means that for any $a\in A$, the set $E_a = hA\setminus h(A\setminus\{a\})$
is infinite.

Let $W$ be a nonempty subset of $\mathbb{N}$. Denote by $\mathcal{F}^{*}(W)$ the set of all finite, nonempty
subsets of $W$. Let $A(W)$ be the set of all numbers of the form
$\sum\limits_{f\in F}2^f$, where $F\in \mathcal{F}^{*}(W)$.

In 1988, Nathanson \cite{N1988} gave a construction of minimal asymptotic bases of order $h$.
\begin{theoremA}\label{thA}Let $h\geq2$ and let $W_i=\{n\in\mathbb{N}:n\equiv i\pmod {h}\}$ for $i=0, 1,\ldots, h-1$. Let
$A = A(W_0)\cup A(W_1)\cup\cdots\cup A(W_{h-1})$. Then $A$ is a minimal asymptotic
basis of order $h$.
\end{theoremA}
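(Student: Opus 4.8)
The plan is to prove the two halves separately: that $A$ is an asymptotic basis of order $h$, and that it is minimal. For the basis property I would fix $n\ge 2^{h}$, set $M=\lfloor\log_2 n\rfloor\ge h$, and write $n=2^{M}+r$ with $0\le r<2^{M}$. Splitting the binary digits of $r$ according to their residue class modulo $h$ expresses $r=\sum_{i\in I}c_i$ with $c_i\in A(W_i)$, where $I$ is the set of residues that actually occur; thus $r$ is a sum of $k'=|I|\le h$ elements of $A$. If $k'\le h-1$ I would use the identity $2^{M}=2^{M-1}+2^{M-2}+\cdots+2^{M-(h-k')+1}+2^{M-(h-k')+1}$ to split the leading power into exactly $h-k'$ powers of two, each of which lies in $A$; adjoining them to the $c_i$ gives $n$ as a sum of exactly $h$ elements of $A$. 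If $k'=h$ I would instead absorb $2^{M}$ into $c_{i_0}$ with $i_0\equiv M$, which stays in $A(W_{i_0})$ and keeps the number of summands equal to $h$. Hence every $n\ge 2^h$ lies in $hA$ (repetitions among summands being allowed by the definition of $hA$).

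For minimality it suffices, for each $a\in A$, to produce infinitely many $n$ for which every representation $n=a_1+\cdots+a_h$ ($a_j\in A$) uses $a$; such $n$ lie in $E_a=hA\setminus h(A\setminus\{a\})$. Since the sets $A(W_0),\dots,A(W_{h-1})$ are pairwise disjoint, $a$ belongs to a unique class, say $a\in A(W_{i_0})$. The key device is to read everything in base $B=2^{h}$: a number lies in $A(W_i)$ iff all of its base-$B$ digits are $0$ or $2^{i}$, so writing $a=2^{i_0}\sum_{k\in K}B^{k}$ identifies $a$ with a finite set $K\subseteq\mathbb N$. Grouping the summands of a representation by class turns $n=a_1+\cdots+a_h$ into $n=\sum_{i=0}^{h-1}2^{i}Y_i$, where $Y_i$ is a sum of $n_i$ numbers with base-$B$ digits in $\{0,1\}$ and $\sum_i n_i=h$; equivalently each base-$B$ digit $y_{i,k}$ of $Y_i$ is at most $n_i$.

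The crux is a rigidity statement for the digit $B-1=2^{h}-1$. If a representation is carry-free in base $B$, then at a position $k$ where $n$ has digit $B-1$ one needs $\sum_i 2^{i}y_{i,k}=2^{h}-1$ with $\sum_i y_{i,k}\le\sum_i n_i=h$; the minimal number of unit powers needed to build $2^h-1$ is exactly $h$, attained only by the binary expansion, so this forces $y_{i,k}=1$ for all $i$ and hence $n_i=1$ for every $i$ (one summand per class). Once $n_i\equiv 1$, all digits satisfy $y_{i,k}\in\{0,1\}$, the addition is automatically carry-free, and each $y_{i,k}$ is just the $i$-th binary bit of the digit of $n$ at $k$. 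Designing $n$ so that its $i_0$-th bit pattern is exactly $K$ then forces the class-$i_0$ summand to be $2^{i_0}\sum_{k\in K}B^{k}=a$, as required; a natural first candidate of this shape is $n=(2^{h}-1)\sum_{k\in K}B^{k}$, whose base-$B$ digits equal $B-1$ on $K$ and $0$ off $K$.

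The main obstacle, and where the real work lies, is that this rigidity presupposes the absence of base-$B$ carries, and carries genuinely can help an adversary evade $a$: the identity $2^{t}=2^{t-1}+2^{t-1}$ lets one split a summand, and a carry lets one re-merge it elsewhere, so for poorly chosen $n$ (for instance $a$ plus a single remote power of two) there exist representations avoiding $a$ altogether. Ruling such evasions out is the heart of the argument. I would handle it by a budget count: producing a carry out of a digit forces the pre-carry value there to reach $\ge B$, which consumes at least a fixed amount of the total digit budget $\sum_i n_i=h$; by inserting, just below the all-ones digit, ``blocker'' digits equal to $2^{h}-1-2^{i_0}$ (all bits except the $i_0$-th, so as not to disturb the encoding of $a$) and, where needed, a short run of them together with large gaps, one makes every carry too expensive to be compatible with only $h$ summands. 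This pins down $n_i=1$ and carry-freeness for every representation, whereupon the digit analysis above forces $a$ to appear; letting the remote part of the witness range over an infinite family yields infinitely many such $n$, so $E_a$ is infinite. I expect this carry-elimination step to be by far the most delicate and technical part of the proof.
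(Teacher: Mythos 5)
Your first half (the asymptotic basis property) is correct, and is in essence the paper's Lemma~\ref{lem1} (Nathanson's lemma): split the bits of $r$ by residue class mod $h$ and pad with a splitting of the leading power $2^M$. Your minimality set-up is also sound: the base-$B$ dictionary with $B=2^h$ (membership in $A(W_i)$ means all base-$B$ digits lie in $\{0,2^i\}$), the rigidity of the digit $B-1$ under carry-free addition, and the need for ``blocker'' digits $2^h-1-2^{i_0}$ are all correct observations. Indeed, your blocker-augmented witness is precisely the witness the paper uses when it proves Theorem~\ref{thm1}, of which Theorem~A is the special case $|W_j(ht-1)|=t$ for all $t$: in binary, $n_T=a+\sum_{j\ne i_0}\sum_{v\in W_j\cap[0,T]}2^v$ has base-$B$ digits $B-1$ on $K$ and $B-1-2^{i_0}$ elsewhere up to $T$. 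Your worry about the bare candidate $(2^h-1)\sum_{k\in K}B^k$ is also justified: for $h=2$, $i_0=0$, $K=\{1\}$ (so $a=4$), the candidate is $n=12$, which has the representation $12=2+10$ with $2,10\in A(W_1)$, avoiding $a$.

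The genuine gap is that the step you yourself identify as the heart of the argument --- showing that for the blocker-augmented witness \emph{every} representation is base-$B$ carry-free with $n_i=1$ for each $i$ --- is asserted, not proved. The per-column inequality $\sum_i y_{i,k}\le h$ that you have in hand does not suffice by itself: a carry \emph{into} the column where the witness has digit $B-1$ destroys the rigidity computation there (that column no longer needs to sum to exactly $2^h-1$), so one must rule out carries globally, not column by column. This is exactly what the paper's Lemma~\ref{lem2} (Nathanson's splitting lemma) accomplishes. Working modulo $2^{th}$, each of the $1+t(h-1)$ distinct powers of $2$ occurring in the truncated witness (the power $2^{\min K}$ plus the $t(h-1)$ blocker powers) must equal the sum of a nonempty subset of the powers contributed by the $h$ summands, and these subsets are pairwise disjoint; since in Theorem~A each summand has at most $t$ bits below $th$, the total budget is $ht$. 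If some class $j\ne i_0$ contained no summand, every one of its $t$ blocker powers would need a subset of size at least $2$, giving a total of at least $1+2t+(h-2)t=1+ht>ht$, a contradiction. Hence each class $j\ne i_0$ claims one summand, and uniqueness of binary representation then forces the remaining summand to equal $a$. Without this lemma (or a rigorous substitute playing the same role), your proposal stops short of a proof of minimality; with it, your outline becomes the paper's argument.
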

Let $h\geq2$ and $\mathbb{N}=W_1\cup\cdots \cup W_h$ be a partition such that $W_i\cap W_j=\emptyset$ if $i\neq j$. The following all partitions meet this condition.
Nathanson also posed the following open problem:
\begin{problem}\label{pro1}Characterise the partitions $\mathbb{N}=W_1\cup\cdots \cup W_h$ with the property that
$A=A(W_1)\cup\cdots \cup A(W_{h})$ is a minimal asymptotic basis of order $h$?
\end{problem}
In 2011, Chen and Chen \cite{CC2011} solved Problem \ref{pro1} for $h=2$.
\begin{theoremB} Let $\mathbb{N}=W_1\cup W_2$ be a partition with $0\in W_1$ such that $W_1$ and $W_2$ are infinite. Then $A=A(W_1)\cup A(W_2)$ is a minimal asymptotic basis of order $2$ if and only if either $W_1$ contains no consecutive integers or $W_2$ contains consecutive integers or both.
\end{theoremB}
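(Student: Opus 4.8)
The plan is to dispose of the asymptotic-basis property first and then concentrate all the work on minimality. For any partition, every integer $n$ whose binary expansion has at least two $1$'s can be written carry-free as a sum of two elements of $A$: split the support $\mathrm{supp}(n)\subseteq\mathbb{N}$ into its part in $W_1$ and its part in $W_2$ when both are nonempty, and into any two nonempty blocks of the same class when $\mathrm{supp}(n)$ lies in a single $W_i$; moreover every $n=2^j$ with $j\ge 1$ equals $2^{j-1}+2^{j-1}$ with $2^{j-1}\in A$. Hence $A$ is an asymptotic basis of order $2$ for \emph{every} partition, so the content of Theorem~B is entirely the minimality criterion, i.e.\ the assertion that $E_a=2A\setminus 2(A\setminus\{a\})$ is infinite for every $a\in A$.

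Throughout I would work in the binary-support picture. Writing $a=\sum_{f\in F}2^f$, a representation $n=a_1+a_2$ is \emph{carry-free} exactly when $\mathrm{supp}(a_1)$ and $\mathrm{supp}(a_2)$ are disjoint, and then it corresponds to splitting $\mathrm{supp}(n)$ into two monochromatic blocks. The only way to pass to a \emph{different} representation is through the carry identity $2^j+2^j=2^{j+1}$ (and its cascades), which replaces a single bit by a pair of bits one position lower, or conversely. Whether such a move produces a legitimate pair in $A$ hinges on whether the consecutive positions $j,j+1$ lie in the same class $W_i$ or straddle the two classes; this is precisely where the hypotheses on consecutive integers enter, and showing that certain families of representations are \emph{forced} is the crux of the argument.

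For the non-minimality (``only if'') direction I would show that if $W_1$ contains a consecutive pair $\{k,k+1\}$ while $W_2$ contains no two consecutive integers, then $a=2^{k+1}\in A(W_1)$ is removable, i.e.\ $E_a$ is finite. Given any representation $n=2^{k+1}+b$ with $b\in A$, I reroute it via $2^{k+1}=2^k+2^k$ with $2^k\in A(W_1)$. If $b\in A(W_1)$ then $2^k+b\in A(W_1)$ (using $k,k+1\in W_1$ to absorb a possible carry), giving $n=2^k+(2^k+b)$. If $b\in A(W_2)$, then since $W_2$ has no consecutive integers every $q\in\mathrm{supp}(b)$ satisfies $q-1\in W_1$, so $c:=2^k+\sum_{q\in\mathrm{supp}(b)}2^{q-1}\in A(W_1)$ and $n=c+c$. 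Apart from finitely many degenerate $n$ these reroutes avoid $2^{k+1}$, so $A\setminus\{2^{k+1}\}$ is still an asymptotic basis and $A$ is not minimal.

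For the minimality (``if'') direction I would prove: if $W_1$ has no consecutive integers or $W_2$ has consecutive integers, then every $a=\sum_{f\in F}2^f$ has $E_a$ infinite. I would exhibit an infinite family of witnesses $n=a+P$, where the padding $P$ is a sum of widely spaced high powers chosen so that (i) the colouring forces the unique carry-free decomposition to isolate $F$ — achieved by placing the high bits in the class opposite to $a$ when $|F|\ge 2$, so that $F$ is the entire monochromatic block of its colour — and (ii) every carry-based reroute is blocked. For the blocking I would use a consecutive pair of $W_2$ at the top of the padding (when $W_2$ has consecutive integers), so that carrying a high bit downward stays inside $W_2$ and cannot attach to the opposite-coloured $a$; when instead $W_1$ has no consecutive integers, I would use that each $t\in W_1$ has both neighbours in $W_2$, together with the bit at position $0$ (recall $0\in W_1$) to fix the parity of $n$ and kill the lowest carry, since a bit in position $0$ can never be created by a carry. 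The main obstacle is exactly step (ii): one must rule out \emph{all} alternative representations, including cascade carries and the ``all-$W_1$'' or ``all-$W_2$'' reroutes that manufacture the bits of $a$ entirely from carries, and then check that the exceptional $n$ are finite in number. This case analysis — organised by the class of $a$, by whether $|F|=1$ or $|F|\ge 2$, and by the local configuration of $\mathrm{supp}(a)$ relative to $W_1$ and $W_2$ — is where essentially all the difficulty of Theorem~B lies.
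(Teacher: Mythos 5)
First, a point of reference: the paper itself states Theorem~B \emph{without} proof (it is quoted from Chen and Chen \cite{CC2011}), so your proposal can only be judged on its own terms; judged that way, it has two genuine gaps. The decisive one is that the minimality (``if'') direction is never actually proved: you describe a family of witnesses $n=a+P$ and then concede that ruling out all alternative representations --- cascade carries, the all-$W_1$/all-$W_2$ reroutes, and the finiteness of the exceptional $n$ --- is ``where essentially all the difficulty of Theorem~B lies.'' That is precisely the content of the theorem in this direction, so what you have is a plan, not a proof. Moreover, the plan's key design choice points the wrong way: padding by \emph{widely spaced} high powers is exactly what makes the forcing step hard. The technique that is known to work (and is used in this paper's own proofs of Theorems 1.1 and 1.2, which face the same difficulty for general $h$) is \emph{dense} padding, $n_T=a+\sum_{w\in[0,T]\setminus W_i}2^w$, so that a counting argument via Lemma 2.2 pins down which summand must lie in which class; with sparse padding no such counting argument is available.

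Second, the non-minimality (``only if'') direction contains a concrete false step. For $b\in A(W_1)$ you claim $2^k+b\in A(W_1)$, ``using $k,k+1\in W_1$ to absorb a possible carry.'' The carry need not stop at $k+1$: take a partition with $k,k+1,m\in W_1$, $k+2\in W_2$, $m>k+2$ (for instance $k=0$, $W_2=\{2,5,8,11,\dots\}$, $m=3$), and $b=2^k+2^{k+1}+2^m\in A(W_1)$; then $2^k+b=2^{k+2}+2^m$, whose binary support $\{k+2,m\}$ meets both classes, so $2^k+b$ is not an element of $A$ at all. The conclusion does survive: in this situation the binary support of $n=2^{k+1}+b=2^k+2^{k+2}+2^m$ itself splits into monochromatic pieces, giving $n=(2^k+2^m)+2^{k+2}\in 2(A\setminus\{2^{k+1}\})$, and in general one should argue from the binary expansion of $n$ after all cascades have been resolved, rather than by modifying $b$. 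But as written, the case $b\in A(W_1)$ of your argument is wrong. (Your case $b\in A(W_2)$ --- the reflection trick $n=c+c$ with $c=2^k+\sum_{q\in\mathrm{supp}(b)}2^{q-1}$, which uses the absence of consecutive integers in $W_2$ --- is correct and is a genuinely nice observation.)
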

In 2020, Ling and Tang \cite{LT2020} focus on Problem \ref{pro1} for $h=3$, they proved that the following result:
\begin{theoremC} For any $i\in\{0,1,2,3,4,5\}$, if $W_0=\{n\in\mathbb{N}:n\equiv i,i+1\pmod{6}\}$, $W_1=\{n\in\mathbb{N}:n\equiv i+2,i+4\pmod{6}\}$ and $W_2=\{n\in\mathbb{N}:n\equiv i+3,i+5\pmod{6}\}$, then $A=A(W_0)\cup A(W_1) \cup A(W_{2})$ is a minimal asymptotic basis of order three.
\end{theoremC}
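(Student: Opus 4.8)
The plan is to recast everything in binary. Since $W_0,W_1,W_2$ partition $\mathbb{N}$, a number lies in $A$ exactly when all its binary $1$'s sit in a single class $W_k$, and that class is then unique; call a summand of class $k$ accordingly. In a sum $n=a_1+a_2+a_3$ the only summands that can deposit a $1$ in a position $j\in W_k$ are those of class $k$, so the three classes communicate solely through carries. Two arithmetic facts will drive the proof: carries propagate only upward and never exceed $2$ (three bits plus a carry is at most $5$), so across a run of positions where $n$ is $0$ and no summand has a bit the carry halves and dies within two steps; and for this partition $j,j+1$ lie in one class iff $j\equiv i\pmod 6$, i.e. only $W_0$ owns consecutive integers, while doubling a bit sends $\mathrm{class}(j)$ to $\mathrm{class}(j+1)$.

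For the basis property, take $n$ large with binary support $S$, put $S_k=S\cap W_k$ and $n_k=\sum_{j\in S_k}2^j$, so $n=n_0+n_1+n_2$ with each $n_k\in A(W_k)\cup\{0\}$. If all three are nonzero we are done; otherwise I manufacture exactly three nonzero summands by splitting a block $n_k$ with $\ge 2$ bits into two elements of $A(W_k)$, or by breaking a lone power through $2^t=2^{t-1}+2^{t-2}+2^{t-2}$. As $n$ is large the exponents suffice, so every large $n$ lies in $3A$.

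Minimality is the real work: for each $a\in A$, of class $k_0$ and top bit $m$, I must exhibit infinitely many $n$ with $n\in E_a=3A\setminus 3(A\setminus\{a\})$, i.e. every representation uses $a$. I take $n=a+2^q+2^p$ (occasionally with one extra guard bit), where $2^q,2^p$ lie far above $m$ and far apart, in the two classes other than $k_0$ and with residues mod $6$ chosen below; the honest sum shows $n\in 3A$, and letting $q,p$ run gives infinitely many $n$. To see that $a$ is unavoidable, I analyze a putative representation through the carry picture: to dodge $a$ one must split its bits across two class-$k_0$ summands and let the remaining summand, together with upward carries, account for $2^q$ and $2^p$. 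Producing a gadget bit without a summand of its class amounts to routing a single carry up to it, and a carry can cross a gap position only if fed there by a summand of that position's class (the \emph{fuel}); the class occupied by no summand supplies no fuel, so a carry entering one of its positions dies or leaves a stray $1$ in a zero-gap of $n$.

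The crux is choosing the gadget so that every such routing is obstructed; this is where the explicit residues enter and the argument divides according to $k_0$. Since doubling sends $\mathrm{class}(j)$ to $\mathrm{class}(j+1)$, a gadget power is in immediate danger precisely when $a$'s class maps onto its class in one step---sharpest for $a\in W_1$, whose bits double straight into $W_2$, and for $a\in W_2$, which maps into both other classes. Residue choices alone handle $a\in W_0$, but in the dangerous cases a carry ladder can still crawl up through the two adjacent fuel positions that $W_0$ uniquely provides; there I must perturb the gadget---adjoining a bit beside the endangered power so that the very carry that would fake it collides with a prescribed $1$ of $n$, forcing a contradiction or exposing a stray bit in a zero-gap. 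Carrying out this finite but delicate exclusion for each $k_0$, and checking that the honest representation still pins $a$ itself rather than a perturbed companion, is the main obstacle. Once it is done, every representation of $n$ contains $a$, so $E_a$ is infinite for all $a$ and $A$ is a minimal asymptotic basis of order three.
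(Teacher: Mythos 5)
Your setup is sound: the binary reformulation, the observation that only $W_0$ contains consecutive integers, the map of how incrementing an exponent moves classes, and the bound that carries among three summands never exceed $2$ are all correct, and your argument for the basis property (splitting blocks, or $2^t=2^{t-1}+2^{t-2}+2^{t-2}$) is essentially Lemma \ref{lem1} and is fine. But the minimality half is not a proof. The step you yourself label ``the main obstacle''---choosing the residues of $q,p$ and the guard bits, and then excluding every carry routing that avoids $a$---is the entire content of the theorem, and it is left undone. Nor is it a routine verification: the unperturbed sparse gadget genuinely fails. Take $i=0$, so $W_0=\{n\equiv 0,1\}$, $W_1=\{n\equiv 2,4\}$, $W_2=\{n\equiv 3,5\}\pmod 6$, let $a=1\in A(W_0)$, and take $n=1+2^q+2^p$ with $q\equiv 2\pmod 6$ (so $q\in W_1$) and $p\equiv 3\pmod 6$ (so $p\in W_2$), both large and far apart. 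Then
$$n=\left(1+2^{q-2}+2^{q-1}\right)+2^{q-2}+2^p,$$
and all three summands lie in $A\setminus\{1\}$: the first has bit set $\{0,q-2,q-1\}\subseteq W_0$ since $q-2\equiv 0$ and $q-1\equiv 1\pmod 6$, the second is $2^{q-2}\in A(W_0)$, and the third is $2^p\in A(W_2)$. Hence $n\notin E_a$ for this residue choice. So the residue selections and guard-bit perturbations are essential, they vary with the class of $a$, and until they are written down and the exclusion is carried out case by case, there is no proof of minimality.

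For comparison, the paper does not attack Theorem C with a sparse witness at all; it derives it as a special case of Theorem \ref{thm1} (here each $W_j$ is a union of two residue classes mod $6$, so $|W_j(6s-1)|=2s$ for every $s$, i.e.\ $|W_j(3t-1)|=t$ for all even $t$). The witness there is dense: $n_T=a+\sum_{j\ne k_0}\sum_{v\in W_j\cap[0,T]}2^v$, i.e.\ $a$ plus \emph{all} powers of $2$ with exponents in the other classes up to $T$. Saturating the other classes turns the problem into a counting argument: by Lemma \ref{lem2} the low bits of any representation must carve out $1+2t$ disjoint nonempty index sets, while each single summand can contribute at most $t$ bits below $3t$ (its bit set lies in one class $W_j$ and $|W_j(3t-1)|=t$); this pigeonhole forces each of the other two classes to be covered exactly by one full summand, and then uniqueness of binary representation forces the remaining summand to equal $a$---with no carry case analysis whatsoever. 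If you want to salvage your approach you must complete the class-by-class exclusion argument you deferred; adopting the dense gadget instead makes that difficulty disappear.
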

In 2021, Sun \cite{S2021} gave a generalization of Theorem A.

\begin{theoremD} Let $h$ and $t$ be two positive integers with $h\geq2$. Let
$$W_j=\bigcup_{i=0}^{\infty}[iht+jt,iht+jt+t-1]$$
for $j=0,1,\cdots,h-1$. Then $A = A(W_0)\cup A(W_1)\cup\cdots\cup A(W_{h-1})$ is a minimal asymptotic basis of order $h$.
\end{theoremD}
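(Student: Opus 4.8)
The plan is to split the proof into the two defining properties: that $A$ is an asymptotic basis of order $h$, and that it is minimal. Throughout I will use the single structural fact that every power of two lies in $A$, since $\mathbb{N}=W_0\cup\cdots\cup W_{h-1}$ forces each singleton $\{f\}$ into some $\mathcal{F}^*(W_j)$, together with the observation that splitting one bit off an element of $A(W_j)$, i.e.\ writing $\sum_{f\in S}2^f=2^{f_0}+\sum_{f\in S\setminus\{f_0\}}2^f$, produces two elements of $A(W_j)$ whenever $|S|\ge 2$.

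For the basis property I will show that every $n\ge h$ already lies in $hA$. Write $n=\sum_{f\in S}2^f$ in binary and set $p=|S|$. If $p\ge h$, group $S$ according to the classes $W_0,\dots,W_{h-1}$; the nonempty groups give $k\le h$ elements of $A$ summing to $n$, and repeatedly splitting off single bits raises the number of summands one at a time up to $p$, so (using $k\le h\le p$) I can stop at exactly $h$. If $p<h$, I instead write $n$ as a sum of exactly $h$ powers of two with repetition: starting from the binary expansion I repeatedly halve the largest power $2^g$ with $g\ge 1$ as $2^{g-1}+2^{g-1}$, each step increasing the count by one, which reaches every value in $[p,n]$ and in particular $h$ since $n\ge h$. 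All summands are powers of two, hence in $A$.

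For minimality, fix $a\in A$; it lies in a unique $A(W_{j_0})$, say $a=\sum_{f\in F}2^f$ with $F\subseteq W_{j_0}$, and I will produce infinitely many $n\in E_a$. The construction is $n=a+\sum_{i=1}^{h-1}2^{g_i}$, where $g_1,\dots,g_{h-1}$ are chosen far above $\max F$, pairwise far apart, block-aligned, and lying one in each of the $h-1$ classes different from $j_0$. This displayed (carry-free) representation already shows $n\in hA$. The forcing step is the following counting fact: in any \emph{carry-free} representation $n=a_1+\cdots+a_h$ the supports of the $a_i$ partition $\operatorname{supp}(n)=F\cup\{g_1,\dots,g_{h-1}\}$ into $h$ nonempty class-homogeneous parts; since each $g_i$ is alone in its class it must be its own part, leaving exactly one part for $F$, so some $a_i$ equals $a$.

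The main obstacle is to rule out representations that are \emph{not} carry-free, for otherwise carries could manufacture the bits of $n$ out of summands avoiding $a$ — exactly what happens already for $h=2$ when a position just below a set bit of $n$ lies in class $j_0$, since then $2^{g}$ may be replaced by $2^{g-1}+2^{g-1}$ and one copy merged with part of $F$. This is where the interval structure of the $W_j$, namely blocks of length $t$ separated by gaps of length $(h-1)t$, becomes essential and must be used: I will choose the $g_i$ aligned to blocks so that the positions below each set bit of $n$ into which a carry could propagate never fall in class $j_0$, and exploit that, with only $h$ summands, every column sum is at most $h$, so a carry dies within $\lceil\log_2 h\rceil$ empty columns and cannot cross the large gaps. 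Combining these class constraints with the carry bound should force every representation to be carry-free, completing the forcing argument; letting the $g_i$ tend to infinity then yields infinitely many $n\in E_a$, and hence minimality.
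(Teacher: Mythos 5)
Your proof of the basis property is fine (it is in essence the proof of Nathanson's lemma, quoted as Lemma 2.1 in the paper). The minimality half, however, has a genuine gap, and it is not a repairable technicality: the sparse witness $n=a+\sum_{i=1}^{h-1}2^{g_i}$ simply does not lie in $E_a$ when $t=1$, and $t=1$ is part of Theorem D (it is exactly Nathanson's Theorem A). For $t=1$ the classes are the residue classes modulo $h$, so your requirement that the $g_i$ occupy all $h-1$ classes other than $j_0$ forces some $g_i\equiv j_0+1\pmod h$, whence $g_i-1\in W_{j_0}$; your own repair condition (``positions below each set bit of $n$ never fall in class $j_0$'') is therefore unsatisfiable. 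Concretely, writing $a=\sum_{f\in F}2^f$ with $F\subseteq W_{j_0}$ and taking $g_i>\max F+1$, the decomposition
$$
n=\bigl(a+2^{g_i-1}\bigr)+2^{g_i-1}+\sum_{k\neq i}2^{g_k}
$$
expresses $n$ as a sum of exactly $h$ elements of $A$, none of which equals $a$: both $a+2^{g_i-1}$ and $2^{g_i-1}$ lie in $A(W_{j_0})$ because $g_i-1\in W_{j_0}\setminus F$. For instance, with $h=2$, $t=1$, $a=1$, one has $2^g+1=2^{g-1}+(2^{g-1}+1)$ with both summands in $A(W_0)$, for every odd $g\ge 3$. (Even in the case $t\ge 2$, where choosing each $g_i$ at the top of a block does defeat this particular trick, your claim that all representations are then forced to be carry-free is only asserted, never proved.)

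The idea you are missing --- and which the paper's proof of its Theorem 1.1 supplies (Theorem D is the special case $|W_j(kht-1)|=kt$ for all $k$) --- is that the witness must be \emph{dense}, not sparse: take $n_T=a+\sum_{j\neq j_0}\sum_{v\in W_j\cap[0,T]}2^v$, that is, $a$ plus \emph{all} powers $2^v$ with $v\le T$ lying in the other classes. Density defeats carries by counting rather than by tracking carry propagation. Below the level $ht'$ (with $t'=kt$ large enough) the witness has $1+(h-1)t'$ distinct bits, whereas any $h$ summands from $A$ have at most $ht'$ bits there in total, because each summand's support lies inside a single class $W_j$ and $|W_j(ht'-1)|=t'$; this is precisely where the block structure of the $W_j$ enters. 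Nathanson's lemma (Lemma 2.2 of the paper) assigns to each witness bit a nonempty set of summand bits, these sets being pairwise disjoint, so at most $ht'-\bigl(1+(h-1)t'\bigr)=t'-1$ witness bits can consume two or more summand bits; consequently, in each class $W_j$ with $j\neq j_0$, at least one of its $t'$ witness bits is matched by a single summand bit sitting at exactly that position, which forces one summand into each $A(W_j)$, $j\neq j_0$. The one remaining summand must then equal $a$ by uniqueness of binary representations. In your construction each class $j\neq j_0$ contributes only one witness bit, so a single carry already destroys this count --- which is exactly what the counterexample above exploits.
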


In 2011, Chen and Chen gave a sufficient condition for Problem \ref{pro1}.

\begin{theoremE}
Let $h\ge 2$ and $r$ be the least integer with $r>\log h/\log 2$. Let $\mathbb{N}=W_1\cup \dots \cup W_h$ be a partition such that each set $W_i$ is infinite and contains $r$ consecutive integers for $i=1,\dots ,h$. Then $A=A(W_1)\cup \dots \cup A(W_h)$ is a minimal asymptotic basis of order $h$.
\end{theoremE}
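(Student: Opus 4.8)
\section*{Proof proposal}

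The plan is to establish the two halves of the conclusion separately: that $A$ is an asymptotic basis of order $h$, and that it is minimal. I expect the first half to be routine and, in fact, to use nothing about the $r$ consecutive integers; the whole weight of the hypothesis (and of the inequality $2^r>h$) will fall on minimality, which is where the real work lies.

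For the asymptotic basis property I would prove the stronger claim that every integer $n\ge h$ lies in $hA$. Write $n$ in binary and let $s(n)$ be the number of its $1$-digits. If $s(n)\ge h$, group the positions of the $1$-digits according to the classes $W_1,\dots,W_h$; this yields a partition of the support of $n$ into $k\le h$ nonempty blocks, each inside a single $W_i$, and by splitting blocks one element at a time I can realize any number of blocks between $k$ and $s(n)$. Since $k\le h\le s(n)$, I arrange exactly $h$ blocks, and each block $B$ gives $\sum_{f\in B}2^f\in A(W_i)\subseteq A$, so $n\in hA$. If instead $s(n)<h$, I write $n$ as a sum of exactly $h$ powers of two: beginning from its binary expansion and repeatedly replacing $2^p$ by $2^{p-1}+2^{p-1}$ increases the number of summands by one at a time, from $s(n)$ up to $n$; as $s(n)\le h\le n$, I stop at $h$ summands, each a single power $2^p\in A$.

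Minimality is the substance of the theorem: for every $a\in A$ I must show $E_a=hA\setminus h(A\setminus\{a\})$ is infinite. Fix $a\in A(W_{i_0})$ with binary support $F$ and $u=\max F$. Using that each $W_j$ with $j\ne i_0$ is infinite and contains an $r$-run, I would manufacture, at high positions, one \emph{rigid block} $B_j\in A(W_j)$ for each $j\ne i_0$ out of that class's consecutive integers, place the blocks and $F$ so that consecutive active regions are separated by gaps of at least $r$ empty positions, and let one further inert high bit (in a class other than $i_0$) range over the infinitely many large elements of a fixed class, producing infinitely many distinct test integers $n=a+\sum_{j\ne i_0}B_j+(\text{inert bit})$. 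Each such $n$ is visibly a sum of $h$ elements of $A$, one of which is $a$. The crux is to show that no representation of $n$ avoids $a$. In the carry-free case this is a clean count: the support of $n$ meets all $h$ classes, so any decomposition of $n$ into single-class summands has at least $h$ blocks; an $h$-term decomposition is therefore exactly the partition by classes, whose $W_{i_0}$-block is all of $F$, giving the summand $a$.

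The main obstacle is to rule out representations that exploit carries, and this is exactly where $2^r>h$ is used. When $h$ nonnegative integers are added columnwise, a one-line induction bounds every carry by $h-1<2^r$. Thus a run of $r$ consecutive integers is ``longer than the carries can reach'': the $h$ summands cannot reconstruct the block pattern of $n$ across such a run by carrying, since doing so would require a carry of size at least $2^r$. The delicate point is the barrier that must protect the support of $F$ from above: a borrow leaking down into the range of $F$ would let a summand belonging to another class absorb part of $a$, and it is precisely an $r$-run in a class $\ne i_0$, together with $2^r>h$, that forbids this and pins the low block to $a$. Carrying out this columnwise analysis --- showing that the single-class constraint on the summands, combined with the carry bound from $2^r>h$, forces any representation to decouple across the gaps and to agree block-by-block with the carry-free partition --- is the technical heart of the argument, and I would expect it to occupy the bulk of the proof.
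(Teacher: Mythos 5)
Your first half (the asymptotic basis property) is fine, but it is not where any difficulty lies: it reproduces the proof of Nathanson's lemma, which the paper simply cites as Lemma \ref{lem1}. The minimality half contains two genuine gaps. First, your construction of the test integers assumes a freedom the hypothesis does not give: each $W_j$ is only guaranteed to contain \emph{some} run of $r$ consecutive integers, at one fixed, unknown location --- the infinitude of $W_j$ supplies infinitely many further elements, not further runs --- so you cannot ``manufacture, at high positions'' a rigid block $B_j$ for each class, nor ``place the blocks and $F$'' so that active regions are separated by gaps of length at least $r$. (For $h=2$, $r=2$, take $W_2=\{2,3\}\cup\{\text{odd }n\ge 5\}$: its only $2$-run sits at a fixed low position, possibly below $\max F$.) Second, the step you yourself call the technical heart --- that no representation of $n$ avoids $a$ --- is deferred rather than proved, and for your \emph{sparse} test integers it is harder than your sketch suggests: even after the run argument pins one summand to each class $W_j$, $j\ne i_0$ (your carry heuristic, which is correct and is essentially the paper's counting step), that summand's support need not lie inside your block $B_j$; it may use any of the infinitely many other elements of $W_j$, at positions where $n$ has digit $0$, and then $n-\sum_{j\ne i_0}a_j$ involves borrows that the bound ``carries are at most $h-1$'' does not by itself control. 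So ``decoupling across the gaps'' is not established, even where gaps exist.

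The paper removes both difficulties with a different choice of test integer; it proves Theorem E as a special case of Theorem \ref{thm2} (renumber so that $0\in W_1$; by Remark \ref{thm2}\hspace{-\widthof{\ref{thm2}}}\ref{thm1}.3 the quantity $\lceil \log (h+1)/\log 2\rceil$ equals your $r$), taking $n_T=a+\sum_{w\in[0,T]\setminus W_i}2^w$, i.e.\ the \emph{dense} complement of $W_i$ up to $T$. Density does all the work: (i) no placement or separation is needed, only that each $W_j\cap[0,T]$, $j\ge 2$, contains one run $\{b+1,\dots,b+r\}$; if all $h$ summands avoided $A(W_j)$, then modulo $2^{b+r+1}$ each contributes at most $2^0+\cdots+2^b=2^{b+1}-1$, so their total is less than $h2^{b+1}\le(2^r-1)2^{b+1}=2^{b+1}+\cdots+2^{b+r}$, a contradiction (made rigorous via Lemma \ref{lem2}); (ii) once a summand $a_j$ is pinned to $A(W_j)$ for each $j\ne i$, its support automatically lies inside $W_j\cap[0,T]$, hence inside the support of $n_T$, so the remaining summand equals $a$ plus a carry-free sum of complementary pieces supported in the classes $W_j$, $j\ne i$; since that summand lies in a single $A(W_l)$ and the classes are disjoint, uniqueness of binary representation forces it to equal $a$ exactly, with no further carry or borrow analysis. (The class $W_1$ needs no run at all: $0\in W_1$ makes $n_T$ odd when $i\ge2$, which pins a summand to $A(W_1)$ by parity.) If you replace your sparse $n$ by the dense $n_T$ and let $T\to\infty$ to get infinitely many test integers, your outline collapses into the paper's proof.
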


For other related results about minimal asymptotic bases, see [2-6,9,11].

In this paper, we continue focus on Problem \ref{pro1}, we give a generalization of Theorem C and Theorem D. For $W\subseteq \mathbb{N}$, set $W(x)=|\{n\in W:n\leq x\}|$.

\begin{theorem}\label{thm1}  Let $h\geq2$ be an integer and $\mathbb{N}=W_1\cup\cdots \cup W_h$ be a partition such that each set $W_j~(1\le j\le h)$ satisfying $|W_j(ht-1)|=t$ for infinitely many integers $t$. Then $A=A(W_1)\cup\cdots \cup A(W_h)$ is a minimal asymptotic basis of order $h$.
\end{theorem}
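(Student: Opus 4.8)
The plan is to prove the two halves separately: first that $A$ is an asymptotic basis of order $h$ (which uses only that the $W_j$ partition $\mathbb{N}$), and then the harder minimality, for which the hypothesis $|W_j(ht-1)|=t$ is essential. Throughout I write $\mathrm{supp}(n)\subseteq\mathbb{N}$ for the set of positions of the $1$'s in the binary expansion of $n$, and $s(n)=|\mathrm{supp}(n)|$ for the binary digit sum. Since the $W_j$ partition $\mathbb{N}$, each $n$ has a canonical splitting $n=\sum_{j=1}^h\pi_j(n)$ with $\pi_j(n)=\sum_{i\in\mathrm{supp}(n)\cap W_j}2^i\in A(W_j)\cup\{0\}$; any element of $A$ lies in a single $A(W_j)$, so in a sum $a_1+\cdots+a_h$ each summand carries a colour $c_i$ with $a_i\in A(W_{c_i})$. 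The key elementary fact is that binary addition can only lower the digit sum, i.e. $\sum_i s(a_i)\ge s\bigl(\sum_i a_i\bigr)$, with equality exactly when no carries occur.

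For the basis property, fix $n$ large. If $s(n)\ge h$, I would partition $\mathrm{supp}(n)$ into exactly $h$ nonempty blocks, each contained in a single $W_j$: the number of attainable monochromatic blocks ranges over every integer from (number of colours met) up to $s(n)$, and since the former is $\le h\le s(n)$, the value $h$ is attainable; each block is an element of $A$ and the blocks sum to $n$. If $s(n)<h$, I would instead keep all but the top bit $2^L$ ($L=\max\mathrm{supp}(n)$, which is large) as single summands and expand $2^L=2^{L-1}+2^{L-2}+\cdots+2^{L-r}+2^{L-r}$ with $r=h-s(n)$, producing exactly $h$ powers of two, each lying in $A$ because every position is in some $W_j$. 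Hence $n\in hA$ for all large $n$; note that infinitude of the $W_j$ (forced by $t\to\infty$ in the hypothesis) is not even needed here.

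Now for minimality, fix $a=\sum_{f\in F}2^f\in A(W_{j_0})$ with $\emptyset\ne F\subseteq W_{j_0}$. Using $|W_{j_0}(ht-1)|=t$ for infinitely many $t$, choose such a $t$ with $ht>\max F$, put $G=W_{j_0}\cap[0,ht)$ (so $|G|=t$ and $F\subseteq G$) and $B=[0,ht)\setminus G$ (the non-$j_0$ positions below $ht$, numbering $(h-1)t$). I would set
$$n_t=\sum_{i\in B\cup F}2^i,$$
so that $\pi_{j_0}(n_t)=a$ while $\pi_j(n_t)$ saturates all of $W_j\cap[0,ht)$ for every $j\ne j_0$. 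Then $n_t\in hA$ via its canonical representation $a+\sum_{j\ne j_0}\pi_j(n_t)$ (all $h$ colours appear once $t$ is large, since each $W_j$ is infinite). As $t$ runs through the infinitely many good values these $n_t$ are distinct and tend to infinity, so everything reduces to the rigidity statement $n_t\notin h(A\setminus\{a\})$ for all large good $t$.

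The heart of the proof is this rigidity, which I would attack by a digit-sum count anchored at the scale $ht$. In any representation $n_t=a_1+\cdots+a_h$ every summand has $\mathrm{supp}(a_i)\subseteq[0,ht)$ (as $n_t<2^{ht}$); the non-$j_0$ summands have support in $B$, of total capacity $(h-1)t$, while $n_t$ already forces them to cover all $(h-1)t$ positions of $B$, so \emph{they must be carry-free and tile $B$ exactly}, leaving the colour-$j_0$ summands a digit-sum budget of at most $t=|G|$ to cover $F$ and to vanish on $G\setminus F$. Combining this with the exact count $|W_{j_0}(ht-1)|=t$, I would argue that there is no room to split $a$ into two colour-$j_0$ pieces or to feed a carry into a $B$-position, so that the representation must contain a single colour-$j_0$ summand equal to $\pi_{j_0}(n_t)=a$. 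The main obstacle is precisely the control of carries, since a carry can shuttle a $1$ between a colour-$j_0$ position and a neighbouring position of another colour, so a representation avoiding $a$ need not respect the colour splitting; the residual slack $|G\setminus F|=t-|F|$ in colour $j_0$ is exactly what such carries might exploit. Compounding this, the hypothesis only makes $t$ good for the single colour $j_0$ — one can build partitions for which no $t$ is simultaneously good for all colours — so the count must close using the balance of $j_0$ alone, with the non-$j_0$ positions merely saturated. I expect to settle the carry analysis by processing the positions from $ht-1$ downward and showing that the exact tiling of $B$ together with $|G|=t$ pins every carry slot, forcing the whole representation to be carry-free and hence to contain $a$ itself.
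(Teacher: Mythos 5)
Your witnesses $n_t$ are essentially the paper's (the paper adds a tail of high non-$j_0$ bits and varies that tail instead of the scale, but this is cosmetic), your direct proof of the basis property is fine, and you correctly locate the difficulty in the control of carries. But the proof has a genuine gap exactly there: the rigidity statement is never proved, only announced (``I expect to settle the carry analysis\dots''), and the intermediate claim you lean on is not just unjustified but false. From ``$n_t$ forces the non-$j_0$ summands to cover all $(h-1)t$ positions of $B$'' you infer that they ``must be carry-free and tile $B$ exactly''; however $n_t$ only forces each position of $B$ to be covered by a bit \emph{or by a carry}, so carry-freeness is the whole content, not a consequence of saturation. Carries really can manufacture the bits of $B$ and even of $F$ when only the colour $j_0$ is balanced. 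Take $h=3$, $j_0=1$, a partition with $0,1,2,4,5\in W_2$, $3\in W_3$, $6\in W_1$, $W_3\cap[0,3t)=\{3\}$, $|W_1\cap[0,3t)|=t$ (a scale good for $W_1$ only; colours $2$ and $3$ can be given their own, much larger, good scales), and $a=2^6$. Then
$$
b_1=2^0+2^1+2^2+2^4+2^5+\sum_{p\in W_2\cap[7,3t)}2^p,\qquad b_2=2^0+2^1+2^2+2^4+2^5,\qquad b_3=2^0+2^4
$$
are three elements of $A(W_2)$, and since $3\cdot 2^0+2\cdot 2^1+2\cdot 2^2=2^0+2^1+2^2+2^3$ and $3\cdot 2^4+2\cdot 2^5=2^4+2^5+2^6$, their sum is exactly your $n_t$. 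So $n_t\notin E_a$ at this $j_0$-good scale: balance of $j_0$ alone does not pin the representation, and your stated plan that ``the count must close using the balance of $j_0$ alone, with the non-$j_0$ positions merely saturated'' cannot be carried out.

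This also isolates the point where you and the paper part ways on the hypothesis. The paper reads it as: there are infinitely many $t$ with $|W_j(ht-1)|=t$ \emph{simultaneously for all} $j$, and its proof needs exactly this. Working modulo $2^{ht}$, simultaneous balance bounds the number of low-order bits of \emph{every} summand, whatever its colour, by $t$, hence the total by $ht$; then Lemma \ref{lem2} (disjoint nonempty index sets, one for each binary digit of the target) together with the count $1+(h-1)t\le ht$ shows that at most $t-1$ of the $(h-1)t$ non-$j_0$ digits can use two or more bits, so each colour $j\ne j_0$ contributes a digit used as a genuine singleton bit, forcing a summand in $A(W_j)$ for every $j\ne j_0$; only then is the representation pinned down. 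You read the hypothesis per colour (good scales for each $W_j$ separately), which for $h\ge 3$ is strictly weaker, and under it the total-bit bound collapses — several summands may share one over-dense colour, which is precisely what the example above exploits. So you must either adopt the simultaneous reading and then supply the Lemma-\ref{lem2}-style counting that constitutes the paper's proof, or keep your reading, in which case you are attempting a strictly stronger theorem than the paper proves and your witnesses demonstrably do not always lie in $E_a$.
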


The second theorem is a generalization of Theorem E.

\begin{theorem}\label{thm2}
Let integer $h\ge 2$ and $\mathbb{N}=W_1\cup \dots \cup W_h$ be a partition such that each set $W_i$ is infinite for $i=1,\dots ,h$, $0\in W_1$ and every $W_i$ contains $\lceil \frac{\log (h+1)}{\log 2}\rceil $ consecutive integers for $i=2,\dots ,h$. Then $A=A(W_1)\cup \dots \cup A(W_h)$ is a minimal asymptotic basis of order $h$.
\end{theorem}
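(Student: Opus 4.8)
The plan is to establish the two assertions separately: that $A$ is an asymptotic basis of order $h$, which uses essentially none of the special hypotheses, and that it is minimal, which is the whole point. Throughout write $r=\lceil \log(h+1)/\log 2\rceil$, so that $2^{r}\ge h+1>h$, and for a positive integer $m$ let $s(m)$ denote the number of $1$'s in its binary expansion. For the basis property I first note that every power of two lies in $A$: since $\mathbb{N}=W_{1}\cup\dots\cup W_{h}$ is a partition, each exponent $m$ lies in a unique $W_{j}$, whence $2^{m}\in A(W_{j})\subseteq A$. I then use two elementary moves. First, any $b\in A$ with $s(b)\ge 2$ splits as a sum of two elements of $A$ lying in the same $A(W_{j})$, by partitioning its support into two nonempty pieces. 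Second, $2^{m}=2^{m-1}+2^{m-1}$ writes a power of two as a sum of two elements of $A$. Now fix $n\ge h$. If $s(n)\ge h$, group the binary digits of $n$ by the class $W_{j}$ to which each belongs; this produces $k\le h$ elements of $A$ with pairwise disjoint supports and no carries, summing to $n$, after which repeated use of the first move raises the number of summands from $k$ to any value in $[k,s(n)]$, in particular to exactly $h$. If instead $s(n)<h\le n$, begin with the binary expansion and repeatedly apply the second move, raising the number of power-of-two summands from $s(n)$ to exactly $h$. In either case $n\in hA$, so $A$ is an asymptotic basis of order $h$.

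For minimality, fix $a\in A(W_{i})$ with support $F$ and $f^{*}=\max F$; I must produce infinitely many $n$ for which every representation $n=b_{1}+\dots+b_{h}$ with $b_{k}\in A$ uses $a$. As a skeleton, consider $n=a+\sum_{j\ne i}2^{g_{j}}$, where the $h-1$ exponents $g_{j}$ are huge, pairwise hugely separated, and placed one in each class $W_{j}$ with $j\ne i$ (possible since these classes are infinite); letting the $g_{j}$ grow gives infinitely many such $n$, and the representation $a,2^{g_{2}},\dots,2^{g_{h}}$ shows $n\in hA$. For forcing I first dispose of carry-free representations: such a representation is exactly a partition of $\mathrm{supp}(n)=F\cup\{g_{j}:j\ne i\}$ into $h$ nonempty blocks, each contained in a single class. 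The class-components of $\mathrm{supp}(n)$ are $F$ (in class $i$) together with the singletons $\{g_{j}\}$ (in the distinct classes $\ne i$), already $h$ of them; no two can be merged, and splitting any one exceeds $h$ blocks, so the unique carry-free representation is the canonical one, which contains $F$, i.e. uses $a$.

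The crux, and the only place the hypotheses are really needed, is to rule out representations involving carries, which can genuinely evade $a$ for the naive skeleton above: if $a=2^{f^{*}}$ is a single power and the position $f^{*}-1$ happens to lie in the class $W_{j}$ of a gadget bit $g_{j}$, then $2^{g_{j}}+2^{f^{*}-1}$ together with $2^{f^{*}-1}$ reproduces $2^{g_{j}}+2^{f^{*}}$ through a carry and avoids $a$. Killing all such escapes is the main obstacle, and it is where I upgrade the skeleton to a rigid gadget built from the length-$r$ consecutive blocks guaranteed in each $W_{j}$ with $j\ge 2$: because $2^{r}\ge h+1>h$, each such $A(W_{j})$ contains every multiple $t\,2^{m_{j}}$ with $1\le t\le 2^{r}-1\ge h$, which is exactly the flexibility needed for the canonical decomposition to absorb the bounded carry mass (at most $h-1$) that an $h$-fold sum can produce; meanwhile $1=2^{0}\in A(W_{1})$, supplied by $0\in W_{1}$, handles the single gadget slot that must fall in the block-free class $W_{1}$ when $i\ge2$ and lets me tune the number of summands to exactly $h$. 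The wide separations then let me peel a putative representation from the top block by block: a size-and-modulus argument shows precisely one summand meets each block, and the carry leaving a block, being bounded, cannot propagate down through the buffer into the low window $[0,f^{*}]$, so that window is filled only by summands supported in $[0,f^{*}]$, which—confined to single classes and constrained by the global count of exactly $h$ summands—must reconstitute $a$ as one of them. In short, the carry-free skeleton already forces $a$, and the theorem reduces to making the high gadget rigid enough that no carry pattern can reproduce the bits of $a$ differently while using only $h$ summands; the consecutive-integer blocks with $2^{r}\ge h+1$ supply the rigidity and carry absorption for $W_{2},\dots,W_{h}$, the hypothesis $0\in W_{1}$ covers the exceptional slot in $W_{1}$, and the infinitude of every $W_{j}$ furnishes infinitely many witnesses, so $E_{a}$ is infinite for every $a\in A$ and $A$ is minimal.
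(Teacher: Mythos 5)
Your first half (that $A$ is an asymptotic basis of order $h$) is fine; it amounts to reproving Lemma 1 of Nathanson, which the paper simply cites. The problem is the second half. For minimality you set up a sparse witness $n=a+\sum_{j\ne i}2^{g_j}$, correctly observe that it can be defeated by carries, and then announce that you will ``upgrade the skeleton to a rigid gadget built from the length-$r$ consecutive blocks'' --- but that gadget is never written down, and every claim that would make it work is asserted rather than proved: ``a size-and-modulus argument shows precisely one summand meets each block,'' ``the carry leaving a block, being bounded, cannot propagate down through the buffer,'' ``the low window \dots must reconstitute $a$.'' These are exactly the statements whose proof constitutes the theorem; as written, your argument is a plan for a proof, with the crux left open. (One of the assertions is also directionally confused: in addition, carries propagate from low bits to high bits, so nothing ``propagates down'' into the window $[0,f^*]$; the actual danger is summands whose low-order bits conspire with upward carries to rebuild the bits of $a$ in a different way, and ruling that out is precisely what is missing.)

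For comparison, the paper closes this gap with two concrete ingredients you do not have. First, the witness is dense, not sparse: $n_T=a+\sum_{w\in[0,T]\setminus W_i}2^w$ with $T$ large enough that each $[0,T]\cap W_j$ ($j\ge 2$) contains a run $\{b+1,\dots,b+r\}$ of $r=\lceil\log(h+1)/\log 2\rceil$ consecutive integers; in particular every bit of each run is a bit of $n_T$. Second, the forcing step is a counting argument via Lemma 2: if some representation $n_T=a_1+\cdots+a_h$ used no element of $A(W_j)$, then every summand's support avoids the run (the $W$'s are disjoint), so modulo $2^{b+r+1}$ each summand contributes at most $2^0+\cdots+2^b=2^{b+1}-1$ toward the run's bits; Lemma 2 then forces
\begin{equation*}
2^{b+1}+\cdots+2^{b+r}=(2^r-1)2^{b+1}\le h\,(2^{b+1}-1)<h\,2^{b+1},
\end{equation*}
contradicting $2^r-1\ge h$. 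Hence each $A(W_j)$, $j\ge 2$ (resp.\ $j\ne i$), contains a summand; after renumbering, the single remaining summand equals $n_T-\sum_{j}a_j$, and uniqueness of binary representation identifies it with $a$. Your intuition about the runs giving ``multiples $t2^{m_j}$ for $1\le t\le 2^r-1\ge h$'' points at the same numerical inequality $2^r-1\ge h$, but it is used in the paper in the opposite way --- not to let the canonical decomposition absorb carries, but to show that $h$ summands avoiding a run are collectively too small to create its bits. Without that step (or an equivalent one), your proof does not go through.
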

\begin{remark} It is easy to see that $\lceil \frac{\log (h+1)}{\log 2}\rceil $ is the least integer greater than $\log h/\log 2$.
\end{remark}

\section{Lemmas}
\begin{lemma}(See \cite[Lemma 1]{N1988}.)\label{lem1}
Let $\mathbb{N}=W_1\cup\cdots \cup W_h$, where $W_i\neq \emptyset$ for $i=1,\ldots,h$. Then $A=A(W_1)\cup\cdots \cup A(W_h)$ is an asymptotic basis of order $h$.
\end{lemma}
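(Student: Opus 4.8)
The plan is to argue constructively, showing that every integer $n\ge 2^{h-1}$ lies in $hA$, which is far more than enough for an asymptotic basis. The starting point is the digit description of $A(W)$: a positive integer belongs to $A(W)$ exactly when every position carrying a $1$ in its binary expansion lies in $W$. Since $\mathbb{N}=W_1\cup\cdots\cup W_h$ is a partition, each $f\in\mathbb{N}$ receives a well-defined colour $c(f)\in\{1,\dots,h\}$ with $f\in W_{c(f)}$, and in particular every single power $2^f$ lies in $A(W_{c(f)})\subseteq A$. Writing $n=\sum_{f\in F}2^f$ with $F$ the set of positions of its $1$-digits, the task reduces to expressing $n$ as a sum of exactly $h$ elements of $A$, that is, to exhibiting $h$ \emph{monochromatic} summands (each with all its $1$-digits of a single colour) that add up to $n$.

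I would then split according to the binary weight $p:=|F|$. If $p\ge h$, no carries are needed: the positions in $F$ split into the colour classes $F\cap W_1,\dots,F\cap W_h$, of which say $k_0\le h$ are nonempty, and by cutting these classes into smaller nonempty blocks one can realise any number of blocks between $k_0$ and $p$; since $k_0\le h\le p$, I choose a partition of $F$ into exactly $h$ nonempty blocks, each contained in a single $W_i$. Summing $2^f$ over each block gives $h$ elements of $A$ with pairwise disjoint supports, so their sum is exactly $n$. If instead $p<h$ (the sparse case, including the powers of two), I keep the $p$ digits of $n$ as singleton summands and break the top power $2^k$, $k=\max F$, using the identity $2^{k}=2^{k-1}+2^{k-2}+\cdots+2^{k-(h-p)}+2^{k-(h-p)}$, which replaces one summand by $h-p+1$ single powers and raises the total count from $p$ to exactly $h$. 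Each summand is again a single power $2^g\in A$, and the (now carry-laden) sum is still $n$.

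The routine easy case is $p\ge h$; the step I expect to require the most care is the sparse case. There I must check that the splitting identity is valid, namely that the lowest manufactured exponent $k-(h-p)$ is nonnegative, which is guaranteed once $k\ge h-1$, i.e.\ $n\ge 2^{h-1}$, and that the replacement leaves the total equal to $n$ despite the carries it introduces. I also need the interpolation fact used in the dense case---that the number of monochromatic blocks can be tuned to any value in $[k_0,p]$---which follows because splitting off one element from a block of size $\ge 2$ raises the block count by exactly one. Notably no use of the infinitude of the $W_i$ is needed; only that the $W_i$ cover $\mathbb{N}$, so that every exponent occurring in the construction carries a colour and the corresponding power belongs to $A$.
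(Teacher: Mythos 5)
Your proposal is correct, and it even yields the explicit threshold that every $n\ge 2^{h-1}$ lies in $hA$. Note that the paper itself gives no proof of this lemma --- it is quoted directly from Nathanson \cite{N1988} --- and your argument is essentially a reconstruction of Nathanson's original one: split the $1$-digits of $n$ into monochromatic blocks when the binary weight is at least $h$ (using the fact that the block count can be interpolated between the number of nonempty colour classes and the weight), and otherwise pad the representation via the telescoping identity $2^{k}=2^{k-1}+\cdots+2^{k-(h-p)}+2^{k-(h-p)}$, which is legitimate since repeated summands are allowed in $hA$ and all exponents stay nonnegative once $k\ge h-1$. Your closing observation is also accurate: only the covering property of $W_1,\dots,W_h$ is used here; nonemptiness and infinitude only become relevant for minimality.
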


\begin{lemma}\label{lem2} Let $w_1,\ldots,w_s$ be $s$ distinct nonnegative integers. If
$$\sum\limits_{i=1}^{s}2^{w_i}\equiv\sum\limits_{j=1}^{t}2^{x_j}\pmod{2^{w_s+1}},$$
where $0\leq x_1,\ldots, x_t<w_s+1$ are integers that not necessarily distinct, then there exist nonempty disjoint sets $J_1,\ldots,J_s$ of $\{1,2,\ldots,t\}$ such that
$$2^{w_i}=\sum\limits_{j\in J_i}2^{x_j}$$
for $i=1,\ldots,s$.
\end{lemma}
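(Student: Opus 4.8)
The plan is to reduce the general statement to a single-target extraction problem and to solve that by an induction that processes binary digits from the bottom up. Throughout I may assume, after relabelling, that $w_1<w_2<\cdots <w_s$, so that $w_s=\max_i w_i$ and every exponent on the right-hand side satisfies $x_j\le w_s$; this ordering is what makes the modulus $2^{w_s+1}$ capture the whole left-hand side, and a small example with $s=2$, $w_1>w_2$ shows that it is genuinely needed for the conclusion to hold.

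First I would isolate the heart of the matter as a one-target claim: \emph{if $2^{y_1},\dots ,2^{y_m}$ are powers of two with every exponent $y_k\le w$ and $\sum_{k=1}^m 2^{y_k}\equiv 2^{w}\pmod{2^{w+1}}$, then some sub-collection sums to exactly $2^{w}$.} I would prove this by induction on $w$. For $w=0$ all $y_k=0$, the congruence forces an odd number of terms, and a single term equals $2^0$. For $w\ge 1$, the congruence says bit $0$ of the sum vanishes, so the number $c_0$ of exponents equal to $0$ is even; I pair those $2^0$'s into $c_0/2$ copies of $2^1$, obtaining a collection with the same total but all exponents in $\{1,\dots ,w\}$, then divide every power by $2$. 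The resulting exponents lie in $\{0,\dots ,w-1\}$ and the total is now $\equiv 2^{w-1}\pmod{2^{w}}$, so the induction hypothesis yields a sub-collection summing to $2^{w-1}$; doubling it gives a sub-collection summing to $2^{w}$, and unwinding the pairing turns this into an honest subset of the original indices (each merged $2^1$ simply corresponds to the two $2^0$'s that produced it). The point to be careful about here, and the main obstacle of the whole lemma, is exactly this handling of carries: equal low powers must be combined in a way that never reuses an index, which is what the pairing bookkeeping guarantees.

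With the one-target claim in hand, I would prove the lemma by induction on $s$. Reducing the hypothesis modulo $2^{w_1+1}$ kills every term $2^{w_2},\dots ,2^{w_s}$ on the left, since $w_i\ge w_1+1$ there, so $\sum_{j=1}^t 2^{x_j}\equiv 2^{w_1}\pmod{2^{w_1+1}}$; moreover the terms with $x_j>w_1$ contribute $0$ modulo $2^{w_1+1}$, whence $\sum_{x_j\le w_1}2^{x_j}\equiv 2^{w_1}\pmod{2^{w_1+1}}$. Applying the one-target claim to the indices with $x_j\le w_1$ produces a set $J_1$ with $\sum_{j\in J_1}2^{x_j}=2^{w_1}$. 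Deleting $J_1$, the remaining powers still have all exponents $\le w_s$ and now satisfy $\sum_{j\notin J_1}2^{x_j}\equiv \sum_{i=2}^s 2^{w_i}\pmod{2^{w_s+1}}$, so the induction hypothesis, applied to the distinct exponents $w_2<\cdots <w_s$ whose maximum is still $w_s$, returns pairwise disjoint sets $J_2,\dots ,J_s$ inside the leftover indices with $\sum_{j\in J_i}2^{x_j}=2^{w_i}$. Together with $J_1$ these give the required pairwise disjoint sets, completing the induction.
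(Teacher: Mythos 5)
Your proof is correct, and it is self-contained where the paper's is not. The paper imports the key extraction step --- the existence of some nonempty $J_i$ with $2^{w_i}=\sum_{j\in J_i}2^{x_j}$ --- wholesale from the proof of Lemma 2 of Nathanson \cite{N1988}; its only new content is the disjointness bookkeeping: extract $J_1$, subtract it from the congruence, extract $J_2$ from the leftover indices, and iterate. Your outer induction on $s$ is exactly this same bookkeeping (peel off one target, pass to the residual congruence, recurse), so there you coincide with the paper. The genuine difference is your one-target claim, which replaces the citation by an elementary induction on $w$ that handles the binary carries explicitly: the congruence forces an even number of $2^0$'s, which you pair into $2^1$'s, halve, apply the induction hypothesis, and then unwind the pairing so that no index is reused. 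What this buys is a complete proof readable without consulting Nathanson's paper, and it also forces you to notice --- correctly --- that the lemma as literally stated needs $w_s=\max_i w_i$: your $s=2$, $w_1>w_2$ example (say $w_1=1$, $w_2=0$, $t=1$, $x_1=0$) satisfies the hypothesis but admits no two disjoint nonempty sets, a hypothesis the paper leaves implicit in its indexing. What the paper's route buys is brevity. A small structural point worth keeping: peeling off the \emph{smallest} $w_i$ first, as you do, is what lets you reduce modulo $2^{w_1+1}$ and discard all $x_j>w_1$ cleanly; the paper can process the targets in arbitrary order only because Nathanson's lemma hands it each extraction ready-made.
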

\begin{proof}By the proof of Lemma 2 from \cite{N1988}, there exist nonempty subsets $J_1,\ldots,J_s$ of $\{1,2,\ldots,t\}$ such that
$$2^{w_i}=\sum\limits_{j\in J_i}2^{x_j}$$
for $i=1,\ldots,s$.

The result is trivial for $s=1$. Now we assume that $s\geq 2$. Since there exists a subset $J_1$ of $\{1,2,\ldots,t\}$ such that
$$2^{w_1}=\sum\limits_{j\in J_1}2^{x_j},$$
it follows that
$$\sum\limits_{2\leq i\leq s}2^{w_i}\equiv\sum\limits_{j\in\{1,\ldots,t\}\setminus J_1}2^{x_j}\pmod{2^{w_s+1}},$$
which implies that there exists a subset $J_2$ of $\{1,2,\ldots,t\}\setminus J_1$ such that
$$2^{w_2}=\sum\limits_{j\in J_2}2^{x_j},$$
and so $J_1\cap J_2=\emptyset$. Continuing this process, it follows that there exist nonempty disjoint subsets $J_1,\ldots,J_s$ of $\{1,2,\ldots,t\}$ such that
$$2^{w_i}=\sum\limits_{j\in J_i}2^{x_j},$$
for $i=1,\ldots,s$.

This completes the proof of Lemma \ref{lem2}.
\end{proof}

\section{Proof of Theorem \ref{thm1}}By Lemma \ref{lem1}, it follows that $A$ is an asymptotic basis of order $h$.
For any $a\in A$, there exists $j\in\{1,2,\ldots,h\}$ such that $a\in A(W_j)$. Without loss of generality, we may assume that $a\in A(W_1)$. Then there exists a set $K\subseteq W_1$ such that
$a=\sum\limits_{i\in K}2^{i}$.
Since there exist infinitely many $t$ such that $$|W_j(ht-1)|=t$$ for $j=1,2,\ldots,h$, it follows that there exists an integer $t_{n}$ such that
$$t_{n}h\leq \min K<t_{n+1}h.$$
Let
$$n_T=a+\sum\limits_{j=2}^{h}\sum\limits_{v\in W_j\cap[0,t_{n+1}h-1]}2^v+\sum\limits_{j=2}^{h}\sum\limits_{v\in W_j\cap[t_{n+1}h,T]}2^v,$$
where $T$ is an integer such that $2^T>a$.
Next we shall prove $n_T\in E_a$. Noting that $K(t_{n+1}h-1)\neq\emptyset$ and
$$n_T=\sum\limits_{j\in K(t_{n+1}h-1)}2^{j}+\sum\limits_{j=2}^{h}\sum\limits_{v\in W_j\cap[0,t_{n+1}h-1]}2^v+2^{t_{n+1}h}m$$
for some $m\geq0$.
Let $n_T=b_1+b_2+\cdots+b_h$ be any representation of $n_T$ as a sum of $h$ elements of $A$ and let
$$b_i=\sum\limits_{i\in S_i}2^{i}$$
for $i=1,2\ldots,h$.
Let $$c_i=\sum\limits_{j\in S_i(t_{n+1}h-1)}2^{j}$$ for $i=1,2,\ldots,h$. Then $$c_i\equiv b_i\pmod{2^{t_{n+1}h}}$$ and $$|S_i(t_{n+1}h-1)|\leq t_{n+1}$$ for $i=1,2,\ldots,h$, which implies that
$$\sum\limits_{k\in K(t_{n+1}h-1)}2^{k}+\sum\limits_{j=2}^{h}\sum\limits_{v\in W_j\cap[0,t_{n+1}h-1]}2^v\equiv \sum\limits_{1\leq i\leq h}c_i\equiv\sum\limits_{1\leq i\leq h}\sum\limits_{j\in S_i(t_{n+1}h-1)}2^{j}\pmod{2^{t_{n+1}h}}.$$
Let $$\sum\limits_{k\in K(t_{n+1}h-1)}2^{k}+\sum\limits_{j=2}^{h}\sum\limits_{v\in W_j\cap[0,t_{n+1}h-1]}2^v\equiv\sum\limits_{j=1}^{s}2^{x_j}\pmod{2^{t_{n+1}h}},$$
where $s$ is an integer such that $s\leq t_{n+1}h$. By Lemma \ref{lem2}, there exist nonempty disjoint subsets $J_0,J_1,\ldots,J_{t_{n+1}(h-1)}$ of $\{1,2,\ldots,s\}$ such that
$$2^{\min K}+\sum\limits_{j=2}^{h}\sum\limits_{v\in W_j\cap[0,t_{n+1}h-1]}2^v=\sum\limits_{j\in J_0}2^{x_j}+\sum\limits_{j\in J_1}2^{x_j}+\sum\limits_{j\in J_2}2^{x_j}+\cdots+\sum\limits_{j\in J_{t_{n+1}(h-1)}}2^{x_j}.$$
Therefore,
$$1+t_{n+1}(h-1)\leq1+|J_1|+\cdots+|J_{t_{n+1}(h-1)}|\leq s\leq t_{n+1}h,$$
and so
\begin{equation}\label{equation1}t_{n+1}(h-1)\leq|J_1|+\cdots+|J_{t_{n+1}(h-1)}|\leq t_{n+1}h-1.\end{equation}
Since $|W_j(t_{n+1}h-1)|=t_{n+1}$ for any $j\geq2$, it follows from \eqref{equation1} and Lemma \ref{lem2} that for any $j\geq 2$, there exist $w\in W_j$, $J\subseteq \{1,2,\ldots,s\}$ and $|J|=1$ such that
$$2^w=\sum\limits_{j\in J}2^{x_j},$$
which implies that $x_j=w\in W_j$,
and so $$\{b_1,\ldots,b_h\}\not\subseteq\bigcup\limits_{j=1,j\neq m}^{h}A(W_j)$$ for any $m\geq 2$.
Renumbering the indexes, we always assume that $$b_i\in A(W_i),~~~i\geq2.$$
Then
$$b_1=a+\left(\sum\limits_{j=2}^{h}\sum\limits_{v\in W_j\cap[0,t_{n+1}h-1]}2^v+\sum\limits_{j=2}^{h}\sum\limits_{v\in W_j\cap[t_{n+1}h,T]}2^v-\sum\limits_{2\leq i\leq h}b_i\right).$$
Since the binary representation of $b_1$ is unique, it follows that $b_1=a$, that is $n_T\in E_a$. Noting that $T$ is infinite, we have $A$ is minimal.

This completes the proof of Theorem \ref{thm1}.

\section{Proof of Theorem \ref{thm2}}By Lemma \ref{lem1}, it follows that $A$ is an asymptotic basis of order $h$. For $a\in A$, we assume that $a\in A(W_i)$. Let $E_a=hA\setminus h(A\setminus \{ a\} )$. Now we prove that $E_a$ is infinite.

Let
$$
n_T=a+\sum_{w\in [0,T]\setminus W_i}2^w,
$$
where $T$ is an integer with $T>a$ such that each $[0,T]\cap W_j$ contains $\lceil \frac{\log (h+1)}{\log 2}\rceil $ consecutive integers for $j=2,\ldots,h$. To prove $n_T\in E_a$, it suffices to prove that if $n_T=a_1+a_2+\dots +a_h$ with $a_i\in A$ for $1\le i\le h$, then there exists at least one $a_k=a$.

We distinguish two cases according to whether $i=1$ or $2\le i\le h$.

Case 1. $i=1$. Suppose that there exists an integer $j\ge 2$ such that $$\{ a_1,a_2,\dots ,a_h\} \subseteq \bigcup_{1\le l\le h, l\ne j}A(W_j).$$
Let $\{ b+1,b+2,\dots ,b+\lceil \frac{\log (h+1)}{\log 2}\rceil \}\subseteq [0,T]\cap W_j$. Then by Lemma \ref{lem2} there exists $$\{ a_1',\dots ,a_h'\} \subseteq \bigcup_{1\le l\le h, l\ne j}A(W_j)\cup \{0\}$$ such that
$$
2^{b+1}+\cdots +2^{b+\lceil \frac{\log (h+1)}{\log 2}\rceil}=a_1'+\cdots +a_h'.
$$
Since $a_i'\notin A(W_j)$ for $i=1, \dots ,h$, we have $$a_i'\le 2^0+2^1+\cdots +2^b=2^{b+1}-1$$ for $i=1,\dots ,h$. It follows that
$$
2^{b+1}+\cdots +2^{b+\lceil \frac{\log (h+1)}{\log 2}\rceil}\le h(2^{b+1}-1)<h2^{b+1},
$$
that is $2^{\lceil \frac{\log (h+1)}{\log 2}\rceil}-1<h$, a contradiction. Hence, for any integer $j\ge 2$, we have
$$
\{ a_1,a_2,\ldots ,a_h\} \nsubseteq \bigcup_{1\le l\le h, l\ne j}A(W_l).
$$
Renumbering the indexes, we may assume that $a_i\in A(W_i)$ for $i=2,3,\dots ,h$. It follows that
$$
a_1=a+\sum_{2\le j\le h}(\sum_{w\in [0,T]\cap W_j}2^w-a_j).
$$
Since $a\in A(W_1)$, $W_1,\dots ,W_h$ are disjoint, and the binary representation of $a_1$ is unique, we have $a_1=a$. Therefore $n_T\in E_a$, and $E_a$ is infinite.

Case 2. $i\ge 2$. Since $n_T$ is odd, therefore we may assume that $a_1\in A(W_1)$. Let $2\le j\le h$, $j\ne i$. Similar to the Case 1, we get that
$$
\{ a_1,\dots ,a_h\} \nsubseteq \bigcup_{1\le k\le h, k\ne j}A(W_k).
$$
Renumbering the indexes, we may assume that $a_j\in A(W_j)$ for $j=2,3,\dots ,i-1,i+1,\dots ,h$. It follows that
$$
a_i=a+\sum_{1\le j\le h, j\ne i}(\sum_{w\in [0,T]\cap W_j}2^w-a_j).
$$
Since $a\in A(W_i)$, $W_1,\dots ,W_h$ are disjoint, and the binary representation of $a_i$ is unique, we have $a_i=a$. Therefore $n_T\in E_a$, and $E_a$ is infinite.

This completes the proof.

\end{document}